\numberwithin{equation}{section}
\newcommand{\nt}{\noindent}
\newcommand{\ds}{\displaystyle}
\newcommand{\N}{\mathbb{N}}
\newtheorem{theorem}{Theorem}[section]
\newtheorem{lemma}[theorem]{Lemma}
\newtheorem{cor}[theorem]{Corollary}
\theoremstyle{definition}
\newtheorem{remark}[theorem]{Remark}
\title{Champagne subregions of the unit disc}
\author{Joanna Pres\footnote{School of Mathematical Sciences, University College Dublin, Belfield, Dublin 4, Ireland; joanna.t.pres@gmail.com, joanna.pres@ucd.ie}}
\date{}
\begin{document}
\maketitle
\begin{abstract}
This paper concerns harmonic measure on the domains  that arise when infinitely many disjoint closed discs are removed from the unit disc. It investigates which configurations of discs  are unavoidable for Brownian motion, and obtains  refinements of related results of Akeroyd, and of Ortega-Cerd\`{a} and Seip.
\smallskip

\noindent {\bf Keywords}  harmonic measure,   capacity.

\noindent{\bf 2010 Mathematics Subject Classification}  31A15, 31A25.
\end{abstract}

\section{Introduction}

Let $B(x,r)$ denote the open ball of centre $x$ and radius $r>0$ in $\mathbb{R}^n$ ($n\geq 2$), and let $B=B(0,1)$. We consider a  sequence of pairwise disjoint closed balls $\overline{B}(x_k,r_k)$ such that $|x_k|\to 1$ and $\sup r_k/(1-|x_k|)<1$, and use them to form a \emph{champagne subregion} $\Omega=B\setminus E$ of the unit ball, where $E=\bigcup_{k=1}^\infty \overline{B}(x_k,r_k)$. We assume for convenience that $0\in\Omega$. We say that $E$ is \emph{unavoidable} if it carries full harmonic measure with respect to  $\Omega$, that is, the harmonic measure of $\partial B$ relative to $\Omega$ is zero. This can be viewed as the case where the probability that Brownian motion starting from the origin first exits $\Omega$ through $\partial B$  is zero.

Akeroyd \cite{ak}  has shown that the following phenomenon occurs.
\medskip

\nt \textbf{Theorem A}
\emph{Let $\varepsilon>0$. There is a  champagne subregion of the unit disc $B$ such that $\sum_k r_k<\varepsilon$ and yet $E$ is unavoidable.}
\medskip

\nt Ortega-Cerd\`{a} and Seip \cite{ortse} provided a description of unavoidable configurations of discs in $B$ and improved Akeroyd's result.
\medskip

\nt \textbf{Theorem B}
\begin{enumerate}
\item[(a)] \emph{Let $(x_k)$  be a sequence  in $B\subset \mathbb{R}^2$ satisfying}
\begin{equation}\label{sep0}
\inf_{j\neq k}\cfrac{|x_j-x_k|}{1-|x_k|}>0
\end{equation}
\emph{and}
\begin{equation}\label{dens}
B(x,a(1-|x|))\cap \{x_k: k\in\mathbb{N}\}\neq \emptyset \quad (x\in B)
\end{equation}
\emph{for some $a\in (0,1)$. Let $r_k=(1-|x_k|)\phi(|x_k|)$ for some decreasing function $\phi: [0,1)\to (0,1)$. Then $E$ is unavoidable if and only if }
\[
\int_0^1 \cfrac{1}{(1-t)\log(1/\phi(t))}dt=\infty.
\]
\item[(b)] \emph{For any $(x_k)$ satisfying \eqref{sep0} and \eqref{dens}, and for any $\varepsilon>0$, there exists a  champagne subregion of $B\subset \mathbb{R}^2$ such that $\sum_k r_k<\varepsilon$ and $E$ is unavoidable.}
\end{enumerate}
Recently Gardiner and Ghergu \cite{gargh} obtained the following result for more general champagne subregions when $n\geq 3$, where the separation condition \eqref{sep0} is relaxed. Let $\sigma$ denote normalised surface area measure on $\partial B$ (normalised arclength on $\partial B$, if $n=2$).
\bigskip

\nt \textbf{Theorem C} \emph{Let $\Omega$ be a champagne subregion of the unit ball in $\mathbb{R}^n$ ($n\geq 3$).}
\begin{enumerate}
\item[(a)] \emph{If $E$ is unavoidable, then}
\begin{equation}\label{gar}
\sum_k \frac{(1-|x_k|)^2}{|y-x_k|^2}r_k^{n-2}=\infty \quad for \ \sigma-almost \ every \ y\in\partial B.
\end{equation}
\item[(b)] \emph{If \eqref{gar} holds together with the separation condition }
\begin{equation}\label{gar2}
\inf_{j\neq k}\cfrac{|x_j-x_k|}{r_k^{1-2/n}(1-|x_k|)}>0,
\end{equation}
\emph{then $E$ is unavoidable.}
\end{enumerate}

If we substitute $n=2$ into \eqref{gar2} we obtain \eqref{sep0}, so it would be natural to assume that the latter is the appropriate separation condition to use in the case of the disc. However, a more careful analysis of the plane case yields a stronger, and less obvious, result. This will, in turn, lead to interesting refinements of Theorems A and B.

\begin{theorem}\label{main}Let $\Omega=B\setminus E$ be a champagne subregion of the unit disc $B$.
\begin{itemize}
\item[(a)]   If $E$ is unavoidable, then
\begin{equation}\label{nec}
\sum_k \frac{(1-|x_k|)^2}{|y-x_k|^2} \left\{\log\frac{1-|x_k|}{r_k}\right\}^{-1}=\infty \mbox{ for } \sigma-almost \ every \ y\in\partial B.
\end{equation}
In particular,
\begin{equation*}
\sum_k \frac{(1-|x_k|)^2}{|y-x_k|^2}=\infty \quad for \ \sigma-almost \ every \ y\in\partial B.
\end{equation*}
\item[(b)] Conversely, if \eqref{nec} holds together with the separation condition
\begin{equation}\label{sep}
\inf_{j\neq k}\frac{|x_k-x_j|\left\{\log\frac{1-|x_k|}{r_k}\right\}^{1/2}}{1-|x_k|}>0,
\end{equation}
 then $E$ is unavoidable.

\end{itemize}
\end{theorem}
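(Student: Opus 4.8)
The plan is to recast unavoidability as a statement about minimal thinness at the boundary and then to analyse the resulting Wiener-type series. First I would use the integral representation of the harmonic measure of $\partial B$: writing $K_y$ for the Poisson (Martin) kernel at $y\in\partial B$ and $\hat{R}^{E}_{K_y}$ for its reduced function relative to $E$ in $B$, one has $\omega(x,\partial B,\Omega)=\int_{\partial B}\bigl(K_y(x)-\hat{R}^{E}_{K_y}(x)\bigr)\,d\sigma(y)$. Since the integrand is nonnegative and $K_y(0)=1$, evaluating at the origin shows that $E$ is unavoidable if and only if $K_y=\hat{R}^{E}_{K_y}$ for $\sigma$-almost every $y$; that is, $E$ is unavoidable precisely when $E$ is not minimally thin at $\sigma$-almost every boundary point. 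Both parts of the theorem thus reduce to a criterion for minimal thinness.

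Next I would invoke the Wiener-type characterisation of minimal thinness in terms of Green capacity. Fixing a Whitney decomposition $\{Q_j\}$ of $B$ into squares with $\ell(Q_j)\asymp\operatorname{dist}(Q_j,\partial B)$ and centres $c_j$, minimal thinness of $E$ at $y$ is equivalent to the convergence of $\sum_j \frac{\ell(Q_j)^2}{|y-c_j|^2}\,\mathrm{cap}_B(E\cap Q_j)$, where $\mathrm{cap}_B$ denotes Green (equilibrium) capacity relative to $B$. The single-disc estimate $\mathrm{cap}_B(\overline{B}(x_k,r_k))\asymp\{\log((1-|x_k|)/r_k)\}^{-1}$, read off from the local expansion $G_B(x,x_k)\approx\log\frac{1-|x_k|^2}{|x-x_k|}$, is what produces the logarithmic factor in \eqref{nec}; and for a disc lying in $Q_j$ one has $\ell(Q_j)\asymp 1-|x_k|$ and $|y-c_j|\asymp|y-x_k|$, which produces the Poisson factor $(1-|x_k|)^2/|y-x_k|^2$. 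Hence the whole problem comes down to comparing this ``box series'' with the ``disc series'' in \eqref{nec}.

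For part (a) only subadditivity of capacity is needed. Since $\mathrm{cap}_B(E\cap Q_j)\le\sum_{x_k\in Q_j}\mathrm{cap}_B(\overline{B}(x_k,r_k))\asymp\sum_{x_k\in Q_j}\{\log((1-|x_k|)/r_k)\}^{-1}$, the box series is dominated by the disc series. If $E$ is unavoidable then it is not minimally thin a.e., so the box series diverges a.e., and therefore \eqref{nec} diverges for $\sigma$-almost every $y$; no separation hypothesis enters, which is why \eqref{nec} is asserted in full generality. The ``in particular'' statement is then immediate, since $\{\log((1-|x_k|)/r_k)\}^{-1}$ is bounded above by the standing hypothesis $\sup_k r_k/(1-|x_k|)<1$, so divergence of \eqref{nec} forces divergence of the unweighted sum.

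The substance is part (b), where the reverse comparison $\mathrm{cap}_B(E\cap Q_j)\gtrsim\sum_{x_k\in Q_j}\{\log((1-|x_k|)/r_k)\}^{-1}$ is required, and this is the main obstacle, because Green capacity is only subadditive. I would prove the lower bound from the energy formulation $\mathrm{cap}_B(F)\ge \mu(F)^2\big/\iint G_B\,d\mu\,d\mu$, testing with $\mu=\sum_{x_k\in Q_j}\mu_k$, the sum of the individual equilibrium measures, so that $\mu(F)=\sum_{x_k\in Q_j}c_k$ with $c_k=\{\log((1-|x_k|)/r_k)\}^{-1}$. The diagonal part of the energy contributes $\sum_k c_k$, while each off-diagonal term is $\int p_k\,d\mu_l\approx c_kc_l\,\log\frac{1-|x_k|}{|x_k-x_l|}$, since the equilibrium potential $p_k$ decays only logarithmically away from $\overline{B}(x_k,r_k)$. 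A packing argument (the number of centres within distance $2^{-m}\ell(Q_j)$ of a fixed $x_k$ is $\lesssim(2^{-m}\ell(Q_j)/s)^2$, where $s$ is the minimal separation) bounds the total off-diagonal energy in $Q_j$ by a constant times $(\ell(Q_j)/s)^2\,c_k$ times the diagonal contribution. The off-diagonal part is therefore absorbed into the diagonal one precisely when $(\ell(Q_j)/s)^2 c_k\lesssim 1$, i.e. when $s\gtrsim(1-|x_k|)/\{\log((1-|x_k|)/r_k)\}^{1/2}$ --- which is exactly the separation condition \eqref{sep}. Under \eqref{sep} the energy estimate yields the lower bound, so divergence of \eqref{nec} forces the box series to diverge, $E$ fails to be minimally thin a.e., and $E$ is unavoidable. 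I expect the bookkeeping needed to make the within-box comparability of $1-|x_k|$, $r_k$ and the kernel factors rigorous, and to handle the boxes nearest $y$ together with discs straddling box boundaries, to be the most delicate routine part.
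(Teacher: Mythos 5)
Your proposal is correct, and its skeleton coincides with the paper's: your reduction of unavoidability to non-minimal-thinness at $\sigma$-a.e.\ boundary point is exactly \eqref{unav}, your Whitney-box Wiener criterion is Ess\'en's theorem \eqref{log} (with your Green capacity $\mathrm{cap}_B(E\cap Q_j)$ comparable to $\{\log(2^{-n}/c(E\cap S_{m,n}))\}^{-1}$), and your part (a) --- subadditivity, bounded overlap of discs with boxes, comparability of the kernel factors (the paper's Lemma \ref{sup}) --- is the paper's argument, except that you use countable subadditivity of Green capacity where the paper invokes Ransford's quasi-subadditivity result for logarithmic capacity. The genuine difference is in part (b). The paper does not prove the reverse (quasiadditivity) inequality; it imports it: it introduces the Aikawa--Borichev capacity $\mathcal{C}_2$, rescales each box by $\alpha_n=(4\pi c_1c_3)^{-1}2^n$ so that the cited Theorem D applies (Lemma \ref{alpham}), converts $\mathcal{C}_2$ back to logarithmic capacity on both sides (Remark \ref{rem}), and shrinks the discs by a factor $\delta$ to meet the explicit separation constant $8\sqrt{\pi}c_3c_1^2$ required there. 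You instead prove quasiadditivity of Green capacity within each box directly, via the energy bound $\mathrm{cap}_B(F)\ge\mu(F)^2\big/\iint G_B\,d\mu\,d\mu$ with $\mu$ the sum of the equilibrium measures, absorbing the off-diagonal energy by a packing argument. This buys a self-contained proof with no auxiliary capacity, no rescaling bookkeeping, and no $\delta$-trick (any positive infimum in \eqref{sep} enters only through the final constant), and it makes transparent why the exponent $1/2$ in \eqref{sep} is the natural one: the separation scale $(1-|x_k|)\{\log((1-|x_k|)/r_k)\}^{-1/2}$ is precisely the radius of the disc whose area equals $\mathrm{cap}_B(\overline{B}(x_k,r_k))$, i.e.\ the quantity $\eta(\rho_k)$ in Theorem D, whose proof is in essence your energy argument; the paper's route keeps that analysis inside a cited black box at the cost of the conversion machinery of Section 2.1. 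Two repairs are needed when you write out the energy step: the off-diagonal kernel estimate should read $c_kc_l\bigl(1+\log^+\frac{1-|x_k|}{|x_k-x_l|}\bigr)$ so as to cover $|x_k-x_l|\asymp 1-|x_k|$; and the packing count must use the per-point exclusion radii $\rho_l\asymp(1-|x_l|)c_l^{1/2}$ furnished by \eqref{sep} (the discs $B(x_l,\rho_l/2)$ are pairwise disjoint, so an area count gives $\sum_{l\,\mathrm{in\,shell}\,m}c_l\lesssim 4^{-m}$), rather than a single uniform minimal separation $s$ combined with $\max_l c_l$ as you wrote, since the $c_l$ need not be mutually comparable within a box; with that correction the off-diagonal energy attached to a fixed $k$ is $\lesssim c_k$, which is what the absorption requires.
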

 Using Theorem \ref{main} we  can describe the unavoidable configurations of discs for which  $r_k=(1-|x_k|)\phi(|x_k|)$, where $\phi: [0,1)\to (0,1)$ is decreasing.  Let $M:[0,1)\to[1,\infty)$ be an increasing function satisfying
\[
M(1-t/2)\leq cM(1-t) \quad (0<t\leq 1)
\]
for some $c>1$. The number of centres in a given disc will be denoted by
\[
N_a(x)=\# [B(x,a(1-|x|))\cap \{x_k: k\in \mathbb{N}\}] \quad \mbox{ for some } a\in (0,1).
\]
We can now present our refinement of Theorem B(a).
 \begin{theorem}\label{neces2} Let $\Omega=B\setminus E$ be a champagne subregion of the unit disc, where $r_k=(1-|x_k|)\phi(|x_k|)$.
 \begin{itemize}\item[(a)]If $E$ is unavoidable and there are constants $a\in (0,1)$ and $b>0$ such that $N_a(x)\leq bM(|x|)$ for all $x\in B$,
 then
 \begin{equation}\label{integral}
\int_0^1 \cfrac{M(t)}{(1-t)\log(1/\phi(t))}dt =\infty.
\end{equation}
\item[(b)] Conversely, if \eqref{integral} holds together with
\begin{equation}\label{sep3}
\inf_{j\neq k}\frac{|x_k-x_j|\left\{\log\frac{1}{\phi(|x_k|)}\right\}^{1/2}}{1-|x_k|}>0,
\end{equation} and if there are constants $a\in (0,1)$ and $b>0$ such that $
N_a(x)\geq bM(|x|)$ for all $x\in B$,
then $E$ is unavoidable.
\end{itemize}
\end{theorem}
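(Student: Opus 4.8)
The plan is to derive Theorem \ref{neces2} directly from Theorem \ref{main}, the whole issue being to recognise the integral condition \eqref{integral} as the reformulation of \eqref{nec} appropriate to the special radii $r_k=(1-|x_k|)\phi(|x_k|)$. The starting observation is that for these radii
\[
\log\frac{1-|x_k|}{r_k}=\log\frac{1}{\phi(|x_k|)}=:L_k,\qquad L(t):=\log\frac{1}{\phi(t)}\ \text{increasing},
\]
so that the separation condition \eqref{sep3} is literally \eqref{sep}, and the summand in \eqref{nec} is $(1-|x_k|)^2|y-x_k|^{-2}L_k^{-1}$. I would use two standard facts about this kernel: writing $y=e^{i\theta}$ and $x=(1-d)e^{i\psi}$ with $d=1-|x|$ small, one has $(1-|x|)^2/|y-x|^2\asymp d^2/(d^2+(\theta-\psi)^2)$, a bump of height $1$ and angular width $d$; and $\int_{\partial B}(1-|x|)^2|y-x|^{-2}\,d\sigma(y)=(1-|x|)/(1+|x|)\asymp 1-|x|$, from $\int_{\partial B}P(x,\cdot)\,d\sigma=1$. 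Both parts then reduce to a dyadic comparison between $\sum_k$ and $\int_0^1$, controlled by the count of centres in the annuli $A_n=\{1-2^{-n}\le|x|<1-2^{-n-1}\}$.

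For part (a) I would invoke Theorem \ref{main}(a): if $E$ is unavoidable then \eqref{nec} holds $\sigma$-a.e., so integrating its summand over $\partial B$ and applying Tonelli with the kernel identity above gives $\sum_k(1-|x_k|)L_k^{-1}=\infty$. It then suffices to bound this sum by the integral. Covering each $A_n$ by $\asymp 2^n$ Whitney balls $B(x,a(1-|x|))$ and using $N_a(x)\le bM(|x|)$ bounds the number of centres in $A_n$ by $\lesssim 2^nM(1-2^{-n})$; since $1-|x_k|\asymp 2^{-n}$ and $L_k\ge L(1-2^{-n})$ there, I get $\sum_{x_k\in A_n}(1-|x_k|)L_k^{-1}\lesssim M(1-2^{-n})/L(1-2^{-n})$. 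Summing over $n$ gives $\sum_k(1-|x_k|)L_k^{-1}\lesssim\sum_n M(1-2^{-n})/L(1-2^{-n})$, and a direct estimate shows this series and $\int_0^1\frac{M(t)}{(1-t)L(t)}\,dt$ diverge together, the one–step shift in the index of $L$ between them being absorbed by the doubling of $M$. Hence the divergence of the sum forces \eqref{integral}.

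For part (b), \eqref{sep3} already supplies \eqref{sep}, so by Theorem \ref{main}(b) it is enough to establish \eqref{nec}; in fact I would prove it for every $y\in\partial B$. Fix $y=e^{i\theta}$ and, at the scales $s_n=2^{-n}$ (thinned if necessary so that the following balls are disjoint), apply the hypothesis $N_a(x)\ge bM(|x|)$ at the point $x=(1-s_n)y$: the ball $B((1-s_n)y,as_n)$ contains at least $bM(1-s_n)$ centres $x_k$, each satisfying $|y-x_k|\le(1+a)s_n$ and $1-|x_k|\asymp s_n$, and therefore contributing at least a fixed multiple of $L_k^{-1}\ge L(1-2^{-n-j})^{-1}$, where $j=j(a)$ is a fixed integer. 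Summing the contributions of these distinct centres over $n$ yields
\[
\sum_k\frac{(1-|x_k|)^2}{|y-x_k|^2}L_k^{-1}\gtrsim\sum_n\frac{M(1-2^{-n})}{L(1-2^{-n-j})},
\]
and, exactly as in part (a), this last series diverges together with $\int_0^1\frac{M(t)}{(1-t)L(t)}\,dt=\infty$, the fixed $j$-step shift in the index of $L$ being absorbed by the doubling of $M$. Thus \eqref{nec} holds at every $y$, and Theorem \ref{main}(b) gives that $E$ is unavoidable.

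The substantive step is the uniform lower bound in part (b): extracting \eqref{nec} at (almost) every boundary point from nothing but the lower density $N_a\ge bM$. This is exactly why that hypothesis is imposed at every $x\in B$—it guarantees, for each direction $y$ and each scale, a definite number of centres whose bumps are large at $y$. The remaining work is the two–sided discrete–to–continuous comparison: the upper count of centres per annulus (from covering, using $N_a\le bM$) for part (a), the lower count from a single test ball (using $N_a\ge bM$) for part (b), together with the bookkeeping that lets the doubling of $M$ absorb the fixed index shift in $L=\log(1/\phi)$ caused by the Whitney radius $a2^{-n}$. I expect this bookkeeping, rather than any deep estimate, to be the main thing to get right; notably the separation condition \eqref{sep3} plays no part in producing \eqref{nec} and enters only as the hypothesis \eqref{sep} of Theorem \ref{main}(b).
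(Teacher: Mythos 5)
Your proposal is correct, and its skeleton is the paper's: both parts reduce to Theorem \ref{main} (after noting that \eqref{sep3} is literally \eqref{sep}, since $\log\frac{1-|x_k|}{r_k}=\log\frac{1}{\phi(|x_k|)}$ for these radii), with the density hypotheses on $N_a$ used to pass between \eqref{nec} and \eqref{integral}. The difference lies in how that passage is executed. The paper does not write it out: it invokes the method of \cite{gargh}, Theorem 2, with $\{\phi(t)\}^{n-2}$ replaced by $\{\log(1/\phi(t))\}^{-1}$; there each term of the sum is smeared over a Whitney ball pointwise in $y$, with overlap multiplicity controlled by $N_a\le bM$, yielding the intermediate area integral $\int_B M((3|x|-2)^+)\{(1-|x|^2)\log(1/\phi((3|x|-2)^+))\}^{-1}dx$, which is then radialized by a change of variables. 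You instead integrate over $y$ first (Tonelli plus the Poisson-kernel identity), which collapses part (a) to the radial sum $\sum_k(1-|x_k|)\{\log(1/\phi(|x_k|))\}^{-1}$, compared with the integral by counting centres per dyadic annulus; and in part (b) you obtain \eqref{nec} at every boundary point from test balls along the ray to $y$ at thinned dyadic scales, which is in spirit the Gardiner--Ghergu converse argument that the paper cites. What your route buys is self-containedness and elementarity, since no appeal to \cite{gargh} is needed; the price is the dyadic bookkeeping, which you have diagnosed correctly: every index shift (the $j=j(a)$ shift in $\log(1/\phi)$, and also the thinning of scales to an arithmetic progression needed for disjointness of the test balls) is absorbed by the doubling of $M$ together with the monotonicity of $\log(1/\phi)$, via $M(1-2^{-n-j})\le c^{j}M(1-2^{-n})$ and the termwise comparison of consecutive blocks, so all the comparisons do close up.
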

From this we deduce a further strengthening of Theorem A.
\begin{cor}\label{ake}Let $\varepsilon >0$.
\begin{itemize}
\item[(a)]
 For any $\alpha>0$ there is a champagne subregion of the unit disc such that $\sum_k r_k^{\alpha}<\varepsilon$ and $E$ is unavoidable.
 \item[(b)] For any $\alpha>1$ there is a champagne subregion of the unit disc such that
 \[\sum_k \left\{\log\frac{1}{r_k}\right\}^{-\alpha} <\varepsilon\] and $E$ is unavoidable.
\end{itemize}
\end{cor}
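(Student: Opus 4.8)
The plan is to deduce both parts from Theorem \ref{neces2}(b) by constructing, for the given exponent, a single family of discs whose density is tuned exactly to the separation condition, so that the divergence of the integral \eqref{integral} becomes automatic. The key observation is this: if the centres are packed as densely as \eqref{sep3} permits, that is, with nearest--neighbour spacing $\asymp (1-|x|)\{\log(1/\phi(|x|))\}^{-1/2}$, then a ball $B(x,a(1-|x|))$ accommodates $N_a(x)\asymp\log(1/\phi(|x|))$ of them. Taking $M(t)=\log(1/\phi(t))$, the integrand in \eqref{integral} collapses to $1/(1-t)$, whose integral over $[0,1)$ diverges regardless of $\phi$. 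Thus, for \emph{any} decreasing $\phi:[0,1)\to(0,1)$ with $\phi(0)\le e^{-1}$, the associated maximally--packed configuration makes $E$ unavoidable, and we are free to drive $\phi$ to zero as fast as we wish in order to shrink the discs.

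The construction I would carry out is dyadic. Partition $B$ into the annuli $A_m=\{1-2^{-m}\le|x|<1-2^{-(m+1)}\}$ and, in each $A_m$, place the centres on a near--regular array with spacing $\delta_m\asymp 2^{-m}\{\log(1/\phi_m)\}^{-1/2}$, where $\phi_m=\phi(1-2^{-m})$; this yields $N_m\asymp 2^{m}\log(1/\phi_m)$ centres in $A_m$, each carrying a disc of radius $r\asymp 2^{-m}\phi_m$. I would then check the hypotheses of Theorem \ref{neces2}(b): the spacing gives \eqref{sep3} by construction; the regularity of the array gives $N_a(x)\ge bM(|x|)$ \emph{uniformly} in $x$ with $M(t)=\log(1/\phi(t))$; the doubling condition on $M$ and the pairwise disjointness of the discs, which holds since $\phi_m\{\log(1/\phi_m)\}^{1/2}\to 0$, are routine; and \eqref{integral} diverges as above. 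I expect this geometric bookkeeping to be the main obstacle, namely producing an explicit array in each annulus that simultaneously realises the spacing $\delta_m$, the density bound at \emph{every} point $x$ (not merely at the centres), and disjointness, all with constants uniform in $m$.

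Granted the construction, it remains to choose $\phi$ for each part and to estimate the target sum annulus by annulus via
\[
\sum_{x_k\in A_m} r_k^{\alpha}\asymp N_m\,r^{\alpha}\asymp 2^{m(1-\alpha)}\phi_m^{\alpha}\log(1/\phi_m).
\]
For part (a) I would take the power law $\phi(t)=c_0(1-t)^{\beta}$ with a small constant $c_0\le e^{-1}$, so that $\phi_m\asymp 2^{-\beta m}$ and the $m$--th block above is $\asymp m\,2^{m(1-\alpha-\alpha\beta)}$. Choosing $\beta>\max\{0,\alpha^{-1}-1\}$ makes the resulting series geometric; letting $\beta\to\infty$ then drives its sum below $\varepsilon$, giving $\sum_k r_k^{\alpha}<\varepsilon$ with $E$ unavoidable.

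For part (b) I would exploit that $\log(1/r_k)=m\log 2+\log(1/\phi_m)$ by choosing $\phi$ to decay rapidly, say $\phi(t)=\exp\{-(1-t)^{-\gamma}\}$, so that $\log(1/\phi_m)=2^{\gamma m}$ dominates $m\log 2$ and hence $\log(1/r_k)\asymp 2^{\gamma m}$. Then
\[
\sum_{x_k\in A_m}\left\{\log\frac{1}{r_k}\right\}^{-\alpha}\asymp N_m\,(2^{\gamma m})^{-\alpha}\asymp 2^{m(1+\gamma(1-\alpha))},
\]
which is summable with sum tending to $0$ once $\gamma>(\alpha-1)^{-1}$; here the hypothesis $\alpha>1$ is exactly what is needed to make the exponent negative. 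Taking $\gamma$ large enough then yields $\sum_k\{\log(1/r_k)\}^{-\alpha}<\varepsilon$, again with $E$ unavoidable. Throughout, the choice of $\phi$ and these summation estimates amount to a calibration of exponents, once the configuration of the second paragraph is in hand.
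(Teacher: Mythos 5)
Your overall strategy --- centres packed as densely as \eqref{sep3} allows, the choice $M(t)\asymp\log(1/\phi(t))$ so that the integrand in \eqref{integral} collapses to $1/(1-t)$ and divergence is automatic, a rapidly decaying $\phi$, and a dyadic annulus-by-annulus estimate of the target sum --- is essentially the paper's strategy (the paper proves only part (b), with $\phi(t)=\exp\{-1/(c_0(1-t)^\beta)\}$ and subsquares of the $S_{m,n}$, and deduces (a) from (b) via $r^\alpha\leq c(\alpha)\{\log(1/r)\}^{-2}$). However, your final calibration step has a genuine gap. Theorem \ref{neces2}(b) requires $N_a(x)\geq bM(|x|)$ for \emph{all} $x\in B$, so your configuration must contain discs in the inner region, where $\phi$ is \emph{not} small: with your parametrisations, $\phi(0)=c_0$ in part (a) and $\phi(0)=e^{-1}$ in part (b), independently of $\beta$ and $\gamma$. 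A centre $x_k$ near the origin therefore has $r_k\asymp c_0$ (resp.\ $r_k\asymp e^{-1}$) and contributes to the target sum an amount bounded below by a positive constant not depending on the exponent; for instance, in part (b) a centre at the origin contributes $\{\log(1/r_k)\}^{-\alpha}=1$ no matter how large $\gamma$ is. Hence the full sums do \emph{not} tend to $0$ as $\beta\to\infty$ or $\gamma\to\infty$: your displayed block estimates tend to $0$ only when summed over $m\geq 1$, and simply deleting the inner blocks destroys the hypothesis $N_a(x)\geq bM(|x|)$ near the origin (unless one adds a separate argument, with $a$ close to $1$, that points near $0$ can ``borrow'' centres from $A_1$ --- which you do not give). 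So the claim ``taking $\gamma$ (or $\beta$) large enough yields the sum $<\varepsilon$'' is false for the construction as you describe it.

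The paper closes exactly this gap by a different final move: it shows only that the full sum \emph{converges} (dominated by a convergent geometric series), and then discards finitely many discs to bring the remaining sum below $\varepsilon$, noting that unavoidability survives such a removal --- a point which itself needs justification, and follows because removing finitely many compact discs alters only finitely many terms of the divergent series in the criterion \eqref{log}. Alternatively, your argument can be repaired by putting a large constant in the exponent, e.g.\ $\phi(t)=\exp\{-K(1-t)^{-\gamma}\}$ with $\gamma>(\alpha-1)^{-1}$ fixed: then the inner annuli contribute $\asymp K\cdot K^{-\alpha}=K^{1-\alpha}\to 0$ (this is where $\alpha>1$ enters again), and letting $K\to\infty$ does drive the whole sum below $\varepsilon$; this is in effect what the paper's parameter $c_0$ accomplishes. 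Either fix is needed; without one, the last step of both parts fails.
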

We observe that  for every $\alpha>0$ there exists a constant $c(\alpha)>0$ such that $r^\alpha\leq c(\alpha) \{\log(1/r)\}^{-2}$ for all $r\in(0,1)$. Thus  (b) implies (a) in Corollary \ref{ake}.

\begin{remark}\label{a=1} The conclusion of Corollary \ref{ake}(b) fails when $\alpha=1$. For details see the end of Section 3.
\end{remark}
We will establish the above results by combining the methods of Gardiner and Ghergu \cite{gargh} with some new ideas. From now on we will assume that the dimension $n$ is $2$. We refer to \cite{armgar} for the relevant background material on potential theory.

 \section{Proof of Theorem \ref{main}}
For a positive superharmonic function $u$ on $B$ we define the reduced function of $u$ relative to a subset $E$ of $B$
\[
R_u^E=\inf\{v: v \mbox{ is positive and superharmonic on } B \mbox{ and } v\geq u \mbox{ on } E\}.
\]
The Poisson kernel for $B$ is given by
\[P(x,y)=\frac{1-|x|^2}{|x-y|^2} \ \mbox{ for } y\in \partial B \mbox{ and }x\in B.
 \]
 In view of \cite[Theorem 6.9.1]{armgar}, $E$ is unavoidable if and only if $R_1^E(0)=1$. By the use of the same theorem and the fact that $\int_{\partial B} P(\cdot,y)d\sigma(y)=1$ it can be seen that $E$ is unavoidable if and only if $R^E_{P(\cdot,y)}(0)=1=P(0,y)$ for $\sigma$-almost every $y\in \partial B$. Since $\Omega$ is connected and  $P(\cdot,y)-R^E_{P(\cdot,y)}$ ($y\in\partial B$) is a nonnegative harmonic function on $\Omega$, it follows from the maximum principle that
\begin{equation}\label{unav}
E \mbox{ is unavoidable if and only if } R^E_{P(\cdot,y)}\equiv P(\cdot,y) \mbox{ for } \sigma-\mbox{almost every } y\in\partial B.
\end{equation}
Now for $n\in \N$ and $m\in \mathbb{Z}$ such that $0\leq m<2^{n+4}$ let
\[
S_{m,n}=\left\{re^{i\theta}: 2^{-n-1}\leq 1-r\leq 2^{-n} \mbox{ and } \frac{2\pi m}{2^{n+4}}\leq \theta\leq \frac{2\pi(m+1)}{2^{n+4}}\right\}
\]
and
\[
z_{m,n}=(1-2^{-n})\exp(2\pi i m/2^{n+4}).
\]
It is easy to see that the diameter ${\rm diam}(S_{m,n})$ of $S_{m,n}$ satisfies
\begin{equation}\label{diam}
\ds \cfrac{2^{-n}}{2}\leq{\rm diam}(S_{m,n})\leq \cfrac{2^{-n}}{\sqrt{2}}.
\end{equation}
 Theorem 1 of \cite{essen} (cf. \cite[Corollary 7.4.4]{aies}) tells us that
\begin{equation}\label{log}
R^E_{P(\cdot,y)}\equiv P(\cdot,y) \mbox{ if and only if } \sum_{m,n}\left\{\frac{2^{-n}}{|z_{m,n}-y|}\right\}^2\left\{\log \frac{2^{-n}}{c(E\cap S_{m,n})}\right\}^{-1}=\infty,
\end{equation}
where $c(\cdot)$ denotes logarithmic capacity, and $\left\{\log \frac{2^{-n}}{c(E\cap S_{m,n})}\right\}^{-1}$ is interpreted as 0 whenever $E\cap S_{m,n}$ is polar.

 Before commencing the proof of Theorem \ref{main} we  state the following lemma.

\begin{lemma} \label{sup}There is a constant $c_1>1$ such that, for any $S_{m,n}$ and any $\overline{B}(x_k,r_k)$ which intersects $S_{m,n}:$
\[
c_1^{-1}2^{-n}\leq 1-|x_k|\leq c_1 2^{-n}, \ \mbox{ and } \ c_1^{-1}\leq \frac{|z_{m,n}-y|}{|x_k-y|}\leq c_1 \mbox{ for all } y\in\partial B.
\]
The constant $c_1$ depends on the value of $\sup \{r_k/(1-|x_k|)\}$.
\end{lemma}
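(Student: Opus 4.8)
The plan is to derive all four inequalities from the triangle inequality alone, once one observes that everything relevant to $S_{m,n}$ lives at the common scale $2^{-n}$: both $z_{m,n}$ and the centre $x_k$ of an intersecting disc sit at distance comparable to $2^{-n}$ from $\partial B$, while their mutual distance is also $O(2^{-n})$. I would begin by writing $\lambda=\sup_k r_k/(1-|x_k|)$, so that $0<\lambda<1$ and $r_k\le\lambda(1-|x_k|)$ for every $k$; every constant produced below will depend only on $\lambda$, and I would take $c_1$ to be the largest of them at the very end, so as to avoid any appearance of circular reasoning.

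For the first pair of inequalities I would fix a point $w\in\overline{B}(x_k,r_k)\cap S_{m,n}$, which exists by hypothesis. Then $|w-x_k|\le r_k$ and $2^{-n-1}\le 1-|w|\le 2^{-n}$, so the triangle inequality gives $1-|w|-r_k\le 1-|x_k|\le 1-|w|+r_k$. Substituting $r_k\le\lambda(1-|x_k|)$ turns $1-|x_k|\ge 2^{-n-1}-r_k$ into $(1+\lambda)(1-|x_k|)\ge 2^{-n-1}$, and turns $1-|x_k|\le 2^{-n}+r_k$ into $(1-\lambda)(1-|x_k|)\le 2^{-n}$. Hence $\frac{2^{-n}}{2(1+\lambda)}\le 1-|x_k|\le\frac{2^{-n}}{1-\lambda}$, which is the asserted two-sided comparison of $1-|x_k|$ with $2^{-n}$.

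For the ratio estimate I would first note that $z_{m,n}$ is a corner of the closed cell $S_{m,n}$, so $|z_{m,n}-w|\le{\rm diam}(S_{m,n})$. Combining this with $|w-x_k|\le r_k$, the diameter bound \eqref{diam}, and the upper bound just obtained (which controls $r_k\le\lambda(1-|x_k|)\le\lambda(1-\lambda)^{-1}2^{-n}$), I get $|z_{m,n}-x_k|\le C2^{-n}$ for a constant $C$ depending only on $\lambda$. On the other hand ${\rm dist}(z_{m,n},\partial B)=1-|z_{m,n}|=2^{-n}$ and ${\rm dist}(x_k,\partial B)=1-|x_k|\ge\frac{2^{-n}}{2(1+\lambda)}$, so for every $y\in\partial B$ both $|z_{m,n}-y|$ and $|x_k-y|$ are bounded below by a fixed multiple of $2^{-n}$, and hence by a fixed multiple of $|z_{m,n}-x_k|$. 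The two triangle inequalities $|z_{m,n}-y|\le|z_{m,n}-x_k|+|x_k-y|$ and $|x_k-y|\le|x_k-z_{m,n}|+|z_{m,n}-y|$ then yield $c_1^{-1}\le|z_{m,n}-y|/|x_k-y|\le c_1$ uniformly in $y$, after enlarging $c_1$ if necessary.

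I do not expect a genuine obstacle here; the entire content is the observation that $S_{m,n}$, the vertex $z_{m,n}$, and any disc meeting $S_{m,n}$ all sit at the single scale $2^{-n}$ from $\partial B$ and within $O(2^{-n})$ of one another. The one place that requires the hypotheses, rather than pure geometry, is the step $r_k\le\lambda(1-|x_k|)$ with $\lambda<1$: this is exactly what prevents a disc that merely grazes $S_{m,n}$ from having a centre at a wholly different scale, and it is the reason the constant $c_1$ must be allowed to depend on $\sup_k r_k/(1-|x_k|)$. The only mild care needed is bookkeeping of the intermediate constants before defining $c_1$.
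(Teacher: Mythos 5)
Your proof is correct and complete: the two-sided bound on $1-|x_k|$ follows exactly as you argue from $r_k\le\lambda(1-|x_k|)$ with $\lambda=\sup_k r_k/(1-|x_k|)<1$, and the ratio bound then follows from $|z_{m,n}-x_k|\le C(\lambda)2^{-n}$ together with the lower bounds $|z_{m,n}-y|\ge 2^{-n}$ and $|x_k-y|\ge 1-|x_k|$. The paper omits this proof as ``straightforward,'' and your triangle-inequality argument at the single scale $2^{-n}$, with the correctly identified dependence of $c_1$ on $\sup_k r_k/(1-|x_k|)$, is precisely the intended elementary argument.
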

The proof of Lemma \ref{sup} is straightforward  and hence omitted.

We shall also  make use of the elementary observation that, if $p>1$ then $f(t)=\log(pt)/\log t$ $(t>1)$ is decreasing.
\begin{proof}[Proof  of Theorem  \ref{main}(a)] Suppose that $E$ is unavoidable. Applying \cite[Theorem 5.1.4 (a)]{ran}, concerning a relation between the logarithmic capacity and unions, and Lemma \ref{sup} we obtain
\begin{equation*}
\begin{aligned}
\sum_{m,n}\left\{\frac{2^{-n}}{|z_{m,n}-y|}\right\}^2&\left\{\log \frac{2^{-n}}{c(E\cap S_{m,n})}\right\}^{-1}\\
&
\leq \sum_k \sum_{m,n}\left\{\frac{2^{-n}}{|z_{m,n}-y|}\right\}^2\left\{\log \frac{2^{-n}}{c(\overline{B}(x_k,r_k)\cap S_{m,n})}\right\}^{-1}\\
&\leq c_1^4\sum_k \frac{(1-|x_k|)^2}{|x_k-y|^2}\sum_{m,n}\left\{\log \frac{2^{-n}}{c(\overline{B}(x_k,r_k)\cap S_{m,n})}\right\}^{-1}\\
& \leq C(c_1)\sum_k \frac{(1-|x_k|)^2}{|x_k-y|^2}\sum_{m,n}\left\{\log \frac{c_12^{-n}}{c(\overline{B}(x_k,r_k)\cap S_{m,n})}\right\}^{-1}.
\end{aligned}
\end{equation*}
The last inequality can be deduced from the observation following Lemma \ref{sup} and \eqref{diam} combined with the fact that $c(\overline{B}(x_k,r_k)\cap S_{m,n})\leq {\rm diam}(S_{m,n})$.
Since a given disc $\overline{B}(x_k,r_k)$ intersects at most $c_2$ of the sets $S_{m,n}$ (where $c_2$ is independent of $k$ and of $m,n$) and by  the fact that $c(\overline{B}(x_k,r_k)\cap S_{m,n})\leq c(\overline{B}(x_k,r_k))=r_k$, we obtain
\begin{equation*}
\begin{aligned}
\sum_{m,n}\left\{\frac{2^{-n}}{|z_{m,n}-y|}\right\}^2\left\{\log \frac{2^{-n}}{c(E\cap S_{m,n})}\right\}^{-1}
&\leq C(c_1)c_2\sum_k \frac{(1-|x_k|)^2}{|y-x_k|^2}\left\{\log\frac{1-|x_k|}{r_k}\right\}^{-1}.
\end{aligned}
\end{equation*}
 In view of \eqref{unav} and \eqref{log} we see that \eqref{nec} holds if $E$ is unavoidable.
\end{proof}

\subsection{A quasiadditivity property}
Following \cite{aibor}, we define the capacity $\mathcal{C}_2$ by
\[
\mathcal{C}_2(A)=\inf\left\{\mu(\mathbb{R}^2):  \int \log^+\frac{2}{|x-y|}d\mu(y)\geq 1 \mbox{ on } A\right\}.
\]
Proposition 4.1.1 of \cite{aies} tells us that $\mathcal{C}_2(B(x,r))$ is comparable with
\[
\left\{ \frac{1}{r^2}\int_0^r t\log^+\frac{2}{t}dt\right\}^{-1},
\]
from which it follows that there exists a constant $c_3>1$ such that
\begin{equation}\label{estball}
c_3^{-1} \left\{\log \cfrac{1}{r}\right\}^{-1}\leq\mathcal{C}_2(B(x,r))\leq c_3 \left\{\log \cfrac{1}{r}\right\}^{-1} \quad (0<r\leq 1/2).
\end{equation}
Let $|A|$ denote  the Lebesgue measure of a planar set $A$, and for $\alpha>0$ let $\alpha A=\{\alpha x: x\in A\}$. We write $A^\circ$ to denote the interior of $A$. We recall  the following quasiadditivity property of the capacity $\mathcal{C}_2$ (see \cite[Theorem 3]{aibor}).
\medskip

\nt \textbf{Theorem D}
\emph{For $k\in \mathbb{N}$ and $\rho_k>0$ let $\eta(\rho_k)$ be such that $|B(y_k,\eta(\rho_k))|=\mathcal{C}_2(B(y_k,\rho_k))$. Let $\eta^*(\rho_k)=\max\{\eta(\rho_k), 2\rho_k\}$. If the discs $\{B(y_k, \eta^*(\rho_k))\}$ are pairwise disjoint and $F$ is an analytic subset of $\bigcup_k B(y_k,\rho_k)$, then there is a constant $c_4>0$ such  that}
\begin{equation}\label{F}
\mathcal{C}_2(F)\geq c_4\sum_k \mathcal{C}_2(F\cap B(y_k,\rho_k)).
\end{equation}

\nt Using \eqref{estball} we deduce that if $F$ is an analytic subset of $\bigcup_k B(y_k,\rho_k)$, where $\rho_k\leq 1/(4\pi c_3)$ for all $k$, and if the discs
\[
\left\{B\left(y_k, \left[\pi c_3 \log (1/\rho_k)\right]^{-1/2}\right): k\in \mathbb{N}\right\}
\]
are pairwise disjoint, then \eqref{F} holds.

\begin{lemma}\label{alpham} Suppose that
\begin{equation}\label{quasisep}
  \frac{|x_j-x_k|\left(\log \frac{1-|x_k|}{r_k}\right)^{1/2}}{1-|x_k|}\geq 8\sqrt{\pi}c_3 c_1^2 \quad (j\neq k),
\end{equation}
where $c_1$ is as in Lemma \ref{sup} and $c_3$ as in \eqref{estball}. Then,  for each ,
\begin{equation}\label{quasiad}
\mathcal{C}_2(\alpha_n[\cup_k B(x_k,r_k)\cap S_{m,n}^\circ])\geq c_4 \sum_k \mathcal{C}_2(\alpha_n[B(x_k,r_k)\cap S_{m,n}^\circ]),
\end{equation}
where $\alpha_n=(4\pi c_1 c_3)^{-1}2^n$  and $c_4$ is as in Theorem D.
\end{lemma}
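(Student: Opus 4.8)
The plan is to obtain \eqref{quasiad} as a direct application of the consequence of Theorem~D recorded immediately after its statement (the version deduced from \eqref{estball}), applied to the scaled configuration. Fix $m,n$ and restrict attention to those indices $k$ for which $\overline{B}(x_k,r_k)$ meets $S_{m,n}$; the remaining discs contribute nothing to either side of \eqref{quasiad}. I would put $y_k=\alpha_n x_k$ and $\rho_k=\alpha_n r_k$, so that $\alpha_n B(x_k,r_k)=B(y_k,\rho_k)$, and set $F=\alpha_n[\cup_k B(x_k,r_k)\cap S_{m,n}^\circ]$. Then $F$ is open, hence a Borel, hence analytic, subset of $\bigcup_k B(y_k,\rho_k)$, and since the original discs are pairwise disjoint so are the $B(y_k,\rho_k)$, giving $F\cap B(y_k,\rho_k)=\alpha_n[B(x_k,r_k)\cap S_{m,n}^\circ]$. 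Thus, once the two hypotheses of that consequence are verified, \eqref{F} yields exactly \eqref{quasiad} with the same constant $c_4$.

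It remains to check that $\rho_k\le 1/(4\pi c_3)$ and that the discs $B(y_k,R_k)$ with $R_k=[\pi c_3\log(1/\rho_k)]^{-1/2}$ are pairwise disjoint. The size bound is immediate: by Lemma~\ref{sup} any disc meeting $S_{m,n}$ satisfies $1-|x_k|\le c_1 2^{-n}$, and since $r_k\le 1-|x_k|$ we get $\rho_k=\alpha_n r_k\le (4\pi c_1 c_3)^{-1}2^n\cdot c_1 2^{-n}=1/(4\pi c_3)$. The same computation shows $\alpha_n\le 1/(1-|x_k|)$, so that $1/\rho_k\ge (1-|x_k|)/r_k$ and hence $\log(1/\rho_k)\ge\log\frac{1-|x_k|}{r_k}>0$, which will let me bound $R_k$ from above.

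The crux is the disjointness, and this is where the precise constant in \eqref{quasisep} is consumed. Using the last inequality to bound $R_k$, then \eqref{quasisep} to replace $(\log\frac{1-|x_k|}{r_k})^{-1/2}$ by a multiple of $|x_j-x_k|/(1-|x_k|)$, and finally $1/(1-|x_k|)\le c_1 2^n$ from Lemma~\ref{sup}, I expect to reach
\[
R_k\le \frac{1}{\sqrt{\pi c_3}}\Bigl(\log\tfrac{1-|x_k|}{r_k}\Bigr)^{-1/2}\le \frac{2^n|x_j-x_k|}{8\pi c_3^{3/2}c_1}=\frac{1}{2\sqrt{c_3}}\,\alpha_n|x_j-x_k|\le \tfrac12\,|y_j-y_k|,
\]
where $\alpha_n|x_j-x_k|=|y_j-y_k|$. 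Applying \eqref{quasisep} with the roles of $j$ and $k$ interchanged gives the companion bound $R_j\le\frac12|y_j-y_k|$, whence $R_j+R_k\le|y_j-y_k|$ and the open discs $B(y_j,R_j)$, $B(y_k,R_k)$ are disjoint. The only delicate point is the bookkeeping of constants: the factor $\sqrt{\pi c_3}$ from the definition of $R_k$, the $c_1$ converting $1/(1-|x_k|)$ into $2^n$, and the $4\pi c_1 c_3$ hidden in $\alpha_n$ must combine so that the constant $8\sqrt{\pi}c_3 c_1^2$ in \eqref{quasisep} leaves at most a factor $\tfrac12$ for each of $R_j$ and $R_k$, and the chain above confirms that it does (with room to spare, since $c_3>1$). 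With both hypotheses verified, Theorem~D applies and \eqref{quasiad} follows.
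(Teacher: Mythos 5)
Your proof is correct and takes essentially the same route as the paper's: both scale by $\alpha_n$, verify the size bound $\rho_k\le 1/(4\pi c_3)$ and the pairwise disjointness of the discs $B\bigl(y_k,[\pi c_3\log(1/\rho_k)]^{-1/2}\bigr)$ using $\log(1/\rho_k)\ge\log\frac{1-|x_k|}{r_k}$, \eqref{quasisep} and Lemma \ref{sup}, and then apply the consequence of Theorem D to obtain \eqref{F}. The paper writes the disjointness check as the single chain $|y_k-y_j|\big/\bigl(2\{\pi c_3\log(1/\rho_k)\}^{-1/2}\bigr)\ge c_3^{1/2}>1$, which is the same computation you carry out (with the symmetry in $j,k$ left implicit there and made explicit by you).
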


\begin{proof} Let $F=\cup_k [B(y_k,\rho_k)\cap \alpha_n S_{m,n}^\circ]$, where $y_k=\alpha_n x_k$ and $\rho_k=\alpha_n r_k$. We observe that
\[
\rho_k=\alpha_n r_k\leq \alpha_n (1-|x_k|)\leq \alpha_n c_1 2^{-n}=\cfrac{1}{4\pi c_3}.
\]
For $k$ such that $\overline{B}(x_k, r_k)\cap S_{m,n}\neq \emptyset$ we also have
\[
\log \cfrac{1}{\rho_k}=\log \cfrac{4\pi c_3 c_1 2^{-n}}{r_k} \geq \log\cfrac{1-|x_k|}{r_k}.
\]
For $j\neq k$, we thus see from \eqref{quasisep} that
\[
\begin{aligned}
\cfrac{|y_k-y_j|}{2 \{\pi c_3 \log (1/\rho_k)\}^{-1/2}}
&=\cfrac{\alpha_n |x_k-x_j|\{\log (1/\rho_k)\}^{1/2}}{2(\pi c_3)^{-1/2}}\\
&\geq \cfrac{\alpha_n|x_k-x_j|\left\{\log\frac{1-|x_k|}{r_k}\right\}^{1/2}}{2(\pi c_3)^{-1/2}}\\
&\geq  8\sqrt{\pi}c_3 c_1^2 (1-|x_k|)\cfrac{\alpha_n}{2(\pi c_3)^{-1/2}}\\
&\geq 4\pi c_1 c_3^{3/2} 2^{-n} \alpha_n\\
&=c_3^{1/2}> 1.
\end{aligned}
\]
Hence the required inequality \eqref{quasiad} holds by \eqref{F}.
\end{proof}

\begin{remark}\label{rem}
 (a)
 Suppose that $E\cap S_{m,n}$ is non-polar.
 We  know by \cite[Lemma 5.8.1]{armgar} that there exists a unit measure $\nu$ on  $\partial (\alpha_n [E\cap S_{m,n}])$ such that
 \[
 -\log c(\alpha_n [E\cap S_{m,n}])=\int\log\frac{1}{|x-y|}d\nu(y)
 \]
 for each $x\in \alpha_n(E^{\circ}\cap S_{m,n}^\circ)$. Since, for $x,y \in \alpha_n (E\cap S_{m,n})$, we have
 \[
 |x-y|\leq \alpha_n {\rm diam}(S_{m,n})\leq \frac{1}{4\pi c_1c_3}<1,
 \]
 it follows that
 \[
 \int\log^+\frac{2}{|x-y|}d\nu(y)\geq -\log c(\alpha_n [E\cap S_{m,n}])>0 \ \mbox{ for }x\in \alpha_n(E^{\circ}\cap S_{m,n}^\circ).
 \]
 Hence
 \begin{equation}\label{c2}
 \begin{aligned}
 \mathcal{C}_2(\alpha_n [E^\circ\cap S_{m,n}^\circ])\leq \frac{\nu(\mathbb{R}^2)}{-\log c(\alpha_n [E\cap S_{m,n}])}
 = \frac{1}{\log\frac{1}{c(\alpha_n [E\cap S_{m,n}])}}.
 \end{aligned}
 \end{equation}
 Furthermore, since $c(\alpha_n [E\cap S_{m,n}])=\alpha_n c(E\cap S_{m,n})$, we have
 %\[
% c(\alpha_n [E\cap S_{m,n}])=\alpha_n c(E\cap S_{m,n})=\frac{c(E\cap S_{m,n})}{4\pi c_1c_2 2^{-n}}\leq \frac{c(E\cap S_{m,n})}{2^{-n}}.
 %\]
 %Thus, in view of \eqref{c2},
 \begin{equation}\label{login}
  \mathcal{C}_2(\alpha_n [E^\circ\cap S_{m,n}^\circ])\leq \left\{\log \cfrac{2^{-n}}{c(E\cap S_{m,n})}\right\}^{-1}.
 \end{equation}
\medskip

 (b) Let $k\in \mathbb{N}$.
Suppose that $r_k\leq (2^5c_1)^{-1} (1-|x_k|)$, that is, the diameter of $B(x_k,r_k)$ is not greater than the length of the shortest line segment joining points of $S_{m,n}$, where $S_{m,n}\cap B(x_k,r_k)\neq \emptyset$. Then $B(x_k, r_k)$ intersects at most four sets $S_{m,n}$, and one of them contains a disc of radius $r_k/4$. Recall that $\alpha_{n}=\{4\pi c_1 c_3\}^{-1}2^n$. Since $0<\alpha_n r_k<1/2$,
we can use  \eqref{estball} to obtain
  \[
 \begin{aligned}
 \mathcal{C}_2(\alpha_n[B(x_k,r_k)\cap S_{m,n}^\circ])&
 \geq c_3^{-1} \left\{\log \frac{4}{\alpha_n r_k}\right\}^{-1}\\
&= c_3^{-1} \left\{\log \frac{16\pi c_1 c_3 2^{-n}}{r_k}\right\}^{-1}\\
&\geq c_3^{-1} \left\{\log \frac{16\pi c_1^2 c_3 (1-|x_k|)}{r_k}\right\}^{-1}\\
&\geq C(c_1,c_3) \left\{\log \frac{1-|x_k|}{r_k}\right\}^{-1}.
 \end{aligned}
 \]
 In general, if $r_k\leq 1-|x_k|$, then $(2^5c_1)^{-1}r_k\leq (2^5c_1)^{-1} (1-|x_k|)$ and there exists $S_{m,n}$ that intersects $B(x_k,r_k)$ and contains a disc of radius $r_k/(2^7 c_1)$. Thus
 \[
 \begin{aligned}
 \mathcal{C}_2(\alpha_n[B(x_k,r_k)\cap S_{m,n}^\circ])&\geq c_3^{-1}\left\{\log \frac{128 c_1}{\alpha_n r_k}\right\}^{-1}\\
 &\geq C(c_1,c_3)\left\{\log \frac{4}{\alpha_n r_k}\right\}^{-1}\\
 &\geq C(c_1,c_3) \left\{\log \frac{1-|x_k|}{r_k}\right\}^{-1}.
 \end{aligned}
 \]
\end{remark}

\begin{proof}[Proof  of Theorem  \ref{main}(b)] Suppose that \eqref{nec} holds, together with separation condition \eqref{sep}. In view of \eqref{unav} and \eqref{log}, in order to prove that $E$ is unavoidable it is enough to show that, for $\sigma$-almost every $y\in\partial B$,
\[
\sum_{m,n}\left\{\frac{2^{-n}}{|z_{m,n}-y|}\right\}^2\left\{\log \frac{2^{-n}}{c(E\cap S_{m,n})}\right\}^{-1}=\infty.
\]
Let $\delta\in (0,1)$ be small enough so that for all $k$,
\[
\frac{|x_k-x_j|\left\{\log\frac{1-|x_k|}{\delta r_k}\right\}^{1/2}}{1-|x_k|}\geq 8\sqrt{\pi}c_3 c_1^2 \quad (j\neq k).\]
Let $E_\delta=\cup_k B(x_k,\delta r_k)$. We see from Lemma \ref{alpham}, Lemma \ref{sup} and Remark \ref{rem}(b) that
 \[
\begin{aligned}
\sum_{m,n}\left\{\frac{2^{-n}}{|z_{m,n}-y|}\right\}^2 &\mathcal{C}_2(\alpha_n[E_\delta\cap S_{m,n}^\circ])\\
&\geq c_4 \sum_k \sum_{m,n}\left\{\frac{2^{-n}}{|z_{m,n}-y|}\right\}^2  \mathcal{C}_2(\alpha_n[B(x_k,\delta r_k)\cap S_{m,n}^\circ])\\
&\geq C(c_1,c_3,c_4)\sum_k \frac{(1-|x_k|)^2}{|y-x_k|^2} \left\{\log \frac{1-|x_k|}{\delta r_k}\right\}^{-1}.
\end{aligned}
\]
Moreover, by the observation following Lemma \ref{sup}, we have $\log\frac{1-|x_k|}{\delta r_k}\leq c_5\delta^{-1}\log\frac{1-|x_k|}{r_k}$, where $c_5$ depends only on the value of $\sup\{r_k/(1-|x_k|)\}$. Hence
\[
\begin{aligned}
\sum_{m,n}\left\{\frac{2^{-n}}{|z_{m,n}-y|}\right\}^2 &\mathcal{C}_2(\alpha_n[E_\delta\cap S_{m,n}^\circ])\\
&\geq C(c_1,c_3,c_4, c_5)\delta\sum_k \frac{(1-|x_k|)^2}{|y-x_k|^2} \left\{\log \frac{1-|x_k|}{r_k}\right\}^{-1}.
\end{aligned}
\]
Now, the fact that $E_\delta\subseteq E^\circ$ and \eqref{nec} yield
\[
\sum_{m,n}\left\{\frac{2^{-n}}{|z_{m,n}-y|}\right\}^2\mathcal{C}_2(\alpha_n [E^\circ\cap S_{m,n}^\circ])=\infty \quad \mbox{ for } \sigma-\mbox{almost every } y\in\partial B.
\]
Therefore, by \eqref{login},
\[
\sum_{m,n}\left\{\frac{2^{-n}}{|z_{m,n}-y|}\right\}^2\left\{\log \frac{2^{-n}}{c(E\cap S_{m,n})}\right\}^{-1}=\infty,
\]
and we conclude that $E$ is unavoidable.
\end{proof}

\section{Proof of Theorem \ref{neces2} and Corollary \ref{ake}}
\begin{proof}[Proof of Theorem \ref{neces2}(a)] Suppose that $E$ is unavoidable and that $N_a(x)\leq b M(|x|)$ ($x\in B$). It follows from Theorem \ref{main}(a) that
\begin{equation}\label{nec2}
\sum_k \frac{(1-|x_k|)^2}{|y-x_k|^2} \left\{\log\frac{1}{\phi(|x_k|)}\right\}^{-1}=\infty \mbox{ for } \sigma-\mbox{almost every } y\in\partial B.
\end{equation}
Reasoning as in the proof of \cite[Theorem 2]{gargh} with $\{\phi(t)\}^{n-2}$ replaced by $\{\log (1/\phi(t))\}^{-1}$ we see that
\[
\int_{B}
\cfrac{ M((3|x|-2)^+)}{(1-|x|^2) \log \{1/\phi ((3|x|-2)^+)\}}dx=\infty.
\]
It  follows that
\[
\infty=\int_{2/3}^1 \cfrac{ M(3t-2)t}{(1-t^2) \log \{1/\phi (3t-2)\}}dt\leq \int_{2/3}^1 \cfrac{M(3t-2)}{(1-t) \log \{1/\phi (3t-2)\}}dt,
\]
and so \eqref{integral} holds.
\end{proof}

\begin{proof}[Proof of Theorem \ref{neces2}(b)] Again, by adapting the method from the proof of \cite[Theorem 2(b)]{gargh}, it can be shown that if $\Omega$ is a champagne subregion  with $r_k=(1-|x_k|)\phi(|x_k|)$ for which \eqref{integral} holds, and if there are constants $a\in (0,1)$ and $b>0$ such that  $N_a(x)\geq bM(|x|)$ for all $x\in B$,
then
\[
\sum_k \frac{(1-|x_k|)^2}{|y-x_k|^2} \left\{\log\frac{1-|x_k|}{r_k}\right\}^{-1}=\infty \quad (y\in\partial B).
\]
Since \eqref{sep3} corresponds to \eqref{sep}, it follows from Theorem \ref{main}(b) that $E$ is unavoidable.
\end{proof}

\begin{proof}[Proof of Corollary \ref{ake}] Since (b) implies (a) in Corollary \ref{ake} we only prove part (b).
Let $\varepsilon>0$ and $\alpha>1$. Let $\beta>1/(\alpha-1)$ and $\phi(t)=\exp\left(-\frac{1}{c_0(1-t)^\beta}\right)$ for $t\in [0,1)$ and some $c_0\in(0,1)$. Let $p_n$ be the integer part of $2^{n\beta/2}$.
We divide each \lq\lq square" $S_{m,n}$ into $p_n^2$ \lq\lq subsquares". %pieces by dividing each side of $S_{m,n}$ into $p_n$ parts of equal length.
Let $\{x_k:k\in\mathbb{N}\}$ be the collection of centres of those \lq\lq subsquares", and let $r_k=(1-|x_k|)\phi(|x_k|)$. Then the following observations can be made.
\begin{itemize}\item[(a)] Each $x_k$ belongs to some $S_{m,n}$ and so $1-|x_k|\leq 2^{-n}$. Moreover, since $|x_k-x_j|\geq \gamma 2^{-n}/p_n$ ($j\neq k$) for some universal constant $\gamma>0$, for $j\neq k$ we have
\[
\begin{aligned}
\frac{|x_k-x_j|\left\{\log\frac{1}{\phi(|x_k|)}\right\}^{1/2}}{1-|x_k|}
&=\frac{|x_k-x_j|}{\{c_0(1-|x_k|)^\beta\}^{1/2}(1-|x_k|)}\\
&\geq C(c_0,\gamma) \frac{2^{-n}}{p_n} \frac{1}{(1-|x_k|)^{\beta/2+1}}\\
 &\geq C(c_0,\gamma) \frac{2^{-n}}{2^{n\beta/2}}\frac{1}{(2^{-n})^{\beta/2+1}}\\
 &=C(c_0, \gamma).
\end{aligned}
\]

\item[(b)]  Let $M: [0,1)\to [1,\infty)$ be given by $M(t)=(1-t)^{-\beta}$. Then $M$ satisfies
\[
M(1-t/2)= 2^\beta M(1-t) \quad (t\in (0,1]),
\]
and clearly \eqref{integral} holds.

\item[(c)] If $0<a<1$ is sufficiently large, $N_a(x)\geq M(|x|)$ for all $x\in B$.
\item[(d)] If $c_0$ is sufficiently small, the closed discs $\overline{B}(x_k,r_k)$ are pairwise disjoint.
\end{itemize}
It follows from Theorem \ref{neces2}(b) that $E$ is unavoidable.

Finally, since $r_k\leq \exp\left(-\frac{1}{c_0(1-|x_k|)^\beta}\right)$, we deduce that
\[
\begin{aligned}
\sum_k\frac{1}{\left(\log \frac{1}{r_k}\right)^\alpha}
&\leq c_0^\alpha \sum_k (1-|x_k|)^{\alpha\beta}
\leq c_0^\alpha\sum_n (2^{-n})^{\alpha\beta}p_n^2 2^{n+4}
\leq 16c_0^\alpha\sum_n \left\{2^{-\beta(\alpha-1)+1}\right\}^n.
\end{aligned}
\]
Since $-\beta(\alpha-1)+1<0$, the above geometric series converges and so does $\sum_k\left\{\log(1/r_k)\right\}^{-\alpha}$. By omitting a finite number of the discs we can arrange that $\sum_k\left\{\log(1/r_k)\right\}^{-\alpha}<\varepsilon$ and yet the collection of discs is unavoidable.
\end{proof}

\nt\emph{Details of Remark \ref{a=1}}.
 To see that the conclusion of Corollary \ref{ake}(b) fails when $\alpha=1$, suppose that there is a champagne subregion $\Omega$ of the unit disc such that $\sum_k\{\log\frac{1}{r_k}\}^{-1}$ converges. Since $|x_k|\to 1$, by omitting a finite number of discs we can have $\overline{B}(x_k,r_k)\subset B\setminus B(0,1/2)$ for each $k$ and $\sum_k\{\log\frac{1}{r_k}\}^{-1}\leq 1/(2\log 4)$. Let $A$ be the union of all the remaining discs $\overline{B}(x_k,r_k)$. Since the Green capacity of $\overline{B}(x_k,r_k)$ relative to $B(0,2)$ is dominated by $1/\log(1/r_k)$ (see \cite[(5.8.5)]{armgar}), it follows that the value of the capacitary Green potential of $A$ on $B(0,2)$ is not bigger than $1/2$ at $0$. From this we deduce that $A$ is avoidable and so is the whole collection $\{\overline{B}(x_k,r_k): k\in \N\}$.

\bibliographystyle{abbrv}
\bibliography{database}
\end{document}